\documentclass[reqno,12pt]{amsart}

\usepackage{amsmath, epsfig, cite}
\usepackage{amssymb}
\usepackage{amsfonts}
\usepackage{latexsym}
\usepackage{amsthm}
\newtheorem{theorem}{Theorem}[section]

\newtheorem{corollary}[theorem]{Corollary}
\newtheorem{lemma}[theorem]{Lemma}

\theoremstyle{remark}

\setlength{\textwidth}{160.0mm} \setlength{\oddsidemargin}{0mm}
\setlength{\evensidemargin}{0mm} \addtolength{\topmargin}{-.1cm}
\addtolength{\textheight}{.2cm}

\numberwithin{equation}{section}

\allowdisplaybreaks

\author{Xiaoxia Wang}
\address{DEPARTMENT OF MATHEMATICS, SHANGHAI UNIVERSITY, SHANGHAI 200444, P. R. CHINA}
\email{$^*$ Corresponding author. xiaoxiawang@shu.edu.cn (X. Wang), xchangi@shu.edu.cn (C. Xu).}

\author{Chang Xu$^*$}
\title{A $q$-supercongruence modulo the fourth power of a cyclotomic polynomial }
\subjclass[2010]{Primary 33D15; Secondary 11A07, 11B65}
\thanks{This work is supported by Natural Science Foundations of Shanghai (22ZR1424100).}

\keywords{basic hypergeometric series; Watson's $_8\phi_7$ transformation; creative microscoping; Chinese remainder theorem}

\begin{document}

\begin{abstract}
In this paper, a new $q$-supercongruence with two free parameters
modulo the fourth power of a cyclotomic polynomial is obtained. Our main auxiliary tools are  Watson's $_8\phi_7$ transformation formula for basic hypergeometric series, the `creative microscoping' method recently introduced by Guo and Zudilin and the Chinese remainder theorem for coprime polynomials.
By taking suitable parameter substitutions in the established $q$-supercongruence, some nice congruences involving the Bernoulli numbers are derived.
\end{abstract}
\maketitle

\section{Introduction}
In 1997, Van Hamme \cite{Van} conjectured $13$  Ramanujan-type supercongruences which were labeled as (A.2)--(M.2).
Van Hamme's (D.2) states that for primes $p\equiv1\pmod 6$,
\begin{equation*}
\sum_{k=0}^{(p-1)/3}(6k+1)\frac{(1/3)^6_k}{k!^6}
\equiv-p\Gamma_p\left(1/3\right)^9 \pmod{p^4}.
\end{equation*}
Here and in what follows, $p$ is a prime, $(x)_0=1$, $(x)_n=x(x+1)\cdots(x+n-1)$ stands for the Pochhammer's symbol and $\Gamma_p(x)$ is the $p$-adic Gamma function.
In 2006, making use of Dougall's formula, Long and Ramakrishna \cite{Long1} gave an extension of Van Hamme's (D.2):
\begin{equation*}\label{E.3}
\sum_{k=0}^{p-1}(6k+1)\frac{(1/3)^6_k}{k!^6}
\equiv \left\{
\begin{aligned}
-p\Gamma_p(1/3)^9\pmod{p^6}, \qquad\mathrm{if}\quad p\equiv1\pmod6,\\
-\frac{10}{27}p^4\Gamma_p(1/3)^9\pmod {p^6}, \qquad \mathrm{if} \quad p\equiv5\pmod6.
\end{aligned}
\right.
\end{equation*}
Similarly, Liu \cite{Liu3} established a new supercongruence: for $p\geq5$,
\begin{equation}\label{E.4}
\sum_{k=0}^{p-1}(6k-1)\frac{(-1/3)^6_k}{k!^6}
\equiv \left\{
\begin{aligned}
140p^4\Gamma_p(2/3)^9\pmod {p^5}, \qquad \mathrm{if} \quad p\equiv1\pmod6,\\
378p\Gamma_p(2/3)^9\pmod{p^5}, \qquad\mathrm{if}\quad p\equiv5\pmod6.
\end{aligned}
\right.
\end{equation}
Also, Guo and Schlosser \cite{GAS2} proposed one conjecture as follows:
for  $p\equiv 2\pmod3$,
\begin{equation}\label{EQU1}
\sum_{k=0}^{(p+1)/3}(6k-1)\frac{(-1/3)^4_k(1)_{2k}}{(1)^4_k(-2/3)_{2k}}\equiv p\pmod{p^3}.
\end{equation}
By using the hypergeometric series identities and $p$-adic Gamma functions, Jana and Kalita \cite{JanaAndKalita} first confrimed the  supercongruence \eqref{EQU1}.
Later, based on combinatorial identities arising from symbolic summation,  Liu \cite{LJC2} provided a stronger version of \eqref{EQU1}:
for odd primes $p\equiv2\pmod3$,
 \begin{equation}\label{EQU2}
\sum_{k=0}^{(p+1)/3}(6k-1)\frac{(-1/3)^4_k(1)_{2k}}{(1)^4_k(-2/3)_{2k}}\equiv p-p^3\left(\frac{1}{9}B_{p-2}\left(1/3\right)-2\right)\pmod{p^4},
\end{equation}
where the {\em Bernoulli polynomials} are given by
\begin{equation*}
\frac{te^{xt}}{e^t-1}=\sum_{k=0}^{\infty}B_k(x)\frac{t^k}{k!}.
\end{equation*}

During the past few years, there has been an increasing attention to the issue of finding $q$-analogues of
 congruences and supercongruences.
The reader may be referred to  \cite{LW,Guo,Tauraso,WY2,ym,Wei1,Wei2,Wei3,Liu2,Xu_Wang} for some of their work.
Recently, in  \cite{GAS2}, Guo and Schlosser gave a partial $q$-analogue of supercongruence \eqref{EQU1}:
for  integers $n>2$ with $n\equiv 2\pmod3$,
\begin{equation}\label{EQU3}
\sum_{k=0}^{(n+1)/3}[6k-1]\frac{\left(q^{-1};q^3\right)^4_k\left(q^3;q^3\right)_{2k}}{\left(q^3;q^3\right)^4_{k}
\left(q^{-2};q^3\right)_{2k}}q^{4k}\equiv0\pmod{\Phi_n(q)}.
\end{equation}
In the same paper,  they also conjectured a partial $q$-analogue of Liu's supercongruence \eqref{E.4} as follows:
for  integers $n>1$ with $n\equiv1\pmod3$,
\begin{equation*}\label{EQU4}
\sum_{k=0}^{n-1}[6k-1]\frac{\left(q^{-1};q^3\right)^6_k}{\left(q^3;q^3\right)^6_{k}}q^{9k}\equiv0\pmod{[n]\Phi_n(q)^3}.
\end{equation*}
Here and in what follows, the {\em $q$-shifted factorial} is defined as
$(a;q)_0=1$ and $(a;q)_n=\left(1-a\right)\left(1-aq\right)\cdots\left(1-aq^{n-1}\right)$ with $n\in \mathbb{Z}^{+}$.
For brevity, its product form can be written as
$(a_1,a_2,\ldots,a_m;q)_n=(a_1;q)_n (a_2;q)_n\cdots (a_m;q)_n$. And $[n]=[n]_q=1+q+\cdots+q^{n-1}$ denotes the {\em $q$-integer}.
Moreover, $\Phi_n(q)$ represents the $n$-th {\em cyclotomic polynomial} in $q$.

Motivated by the work just mentioned, in this paper, we shall establish a new $q$-supercongruence with two free parameters,
from which we can deduce a partial $q$-analogue of Liu's congruence \eqref{E.4} and a $q$-analogue of Liu's congruence \eqref{EQU2}.

The rest of this paper is arranged as follows. Our main results will be shown in the next section.
Then the  proof of our $q$-supercongruence will be presented in Section \ref{se2},
where the `creative microscoping' method introduced by Guo and Zudilin \cite{GuoZu} and the Chinese remainder theorem for coprime
 polynomials will be  used.
\section{Main results}
\begin{theorem}\label{T.1}
Let $n\equiv 2 \pmod 3$ be a positive integer. Then, modulo $[n]\Phi_n(q)^3$,
\begin{align*}
&\sum_{k=0}^M[6k-1]\frac{\left(q^{-1};q^3\right)^4_k\left(cq^{-1},eq^{-1};q^3\right)_k}
{{\left(q^3;q^3\right)^4_k}\left(q^3/c,q^3/e;q^3\right)_k}\left(\frac{q^9}{ce}\right)^k\nonumber\\
&\equiv[n]q^{(n+1)/3}\frac{\left(q^{-2};q^3\right)_{(n+1)/3}}{\left(q^3;q^3\right)_{(n+1)/3}}\left
(1-[n]^2\sum_{i=1}^{(n+1)/3}\frac{q^{3i}}{[3i]^2}\right)
\sum_{k=0}^{(n+1)/3}\frac{\left(q^4/{ce};q^3\right)_k\left(q^{-1};q^3\right)^3_k}{\left(q^3/c,q^3/e,q^3,q^{-2};q^3\right)_k}q^{3k},\label{E.1}
\end{align*}
here and in what follows $M=(n+1)/3$ or $n-1$.
\end{theorem}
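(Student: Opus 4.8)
The plan is to follow the `creative microscoping' scheme of Guo and Zudilin. First I would introduce a third free parameter $a$ and consider the more general one-parameter family
\[
\sum_{k=0}^M[6k-1]\frac{(aq^{-1},q^{-1}/a,q^{-1},q^{-1};q^3)_k(cq^{-1},eq^{-1};q^3)_k}{(aq^3,q^3/a,q^3,q^3;q^3)_k(q^3/c,q^3/e;q^3)_k}\Bigl(\frac{q^9}{ce}\Bigr)^k,
\]
which reduces to the left-hand side of the theorem when $a=1$. The idea is to prove a closed-form \emph{identity} for this $a$-series that is valid whenever $a=q^n$ or $a=q^{-n}$ (so that the sum truncates naturally), and then let $a\to1$ via the Chinese remainder theorem for coprime polynomials, exactly as in the papers cited. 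The engine producing the closed form should be Watson's $_8\phi_7$ transformation: one specializes its parameters so that the very-well-poised $_8\phi_7$ on the left becomes (a piece of) our series and the balanced $_4\phi_3$ on the right becomes the sum $\sum_{k=0}^{(n+1)/3}(q^4/ce;q^3)_k(q^{-1};q^3)_k^3/(q^3/c,q^3/e,q^3,q^{-2};q^3)_k\,q^{3k}$ appearing on the right-hand side of the theorem, together with the prefactor $[n]q^{(n+1)/3}(q^{-2};q^3)_{(n+1)/3}/(q^3;q^3)_{(n+1)/3}$.

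Concretely, the steps would be: (1) record the specialization of Watson's transformation (with base $q^3$) that yields an identity of the shape ``$a$-series $=$ (rational prefactor in $a,c,e,q$) $\times$ $_4\phi_3$'' when $a=q^{\pm n}$, $n\equiv2\pmod3$; here the condition $n\equiv2\pmod3$ is what makes $(n+1)/3$ an integer and makes the series terminate at $k=(n+1)/3$. (2) Observe that the left-hand side is, as a rational function of $a$, congruent to the $a=1$ value modulo $\Phi_n(q)$ (from the $a q^{-1}$/$q^{-1}/a$ pairing, which contributes a factor divisible by $1-aq^{n-1}$ or similar, killing terms with $(n+1)/3\le k\le M$ for the upper limit $M=n-1$ — this is the standard reason the two choices of $M$ give the same sum mod $\Phi_n(q)$). (3) Evaluate the prefactor at $a=q^n$ and at $a=q^{-n}$; expand to second order in the variable $a-1$ (equivalently in $q^n-1$), which is where the factor $1-[n]^2\sum_{i=1}^{(n+1)/3}q^{3i}/[3i]^2$ emerges — this ``parametric derivative'' computation, matching the $\Phi_n(q)^3$ precision, is the technical heart. (4) Since $[n]\Phi_n(q)^3$ and $\Phi_n(q)^3$ differ by the factor $[n]/\Phi_n(q)$ and $[n]=\prod_{d\mid n,\,d>1}\Phi_d(q)$, combine the congruence mod $\Phi_n(q)^3$ just obtained with a separate, easier congruence mod $[n]/\Phi_n(q)$ (coming from the symmetry $k\mapsto$ reversal of the terminating series, which forces divisibility of the whole sum by $[n]$), and glue them by the Chinese remainder theorem for the coprime polynomials $\Phi_n(q)^3$ and $[n]/\Phi_n(q)$.

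The main obstacle I anticipate is step (3): carrying the expansion of the Watson-derived prefactor in powers of $a-1$ to the order needed for a modulus of $\Phi_n(q)^3$, and recognizing the resulting $q$-harmonic-type sum as precisely $[n]^2\sum_{i=1}^{(n+1)/3}q^{3i}/[3i]^2$. One has to handle the $q$-Gamma/$q$-shifted-factorial quotients $(q^{-2};q^3)_{(n+1)/3}/(q^3;q^3)_{(n+1)/3}$ with care, since at $a=q^{\pm n}$ some factors in numerator and denominator nearly cancel and the surviving $O((a-1)^0)$, $O((a-1)^1)$, $O((a-1)^2)$ pieces must be isolated cleanly; the logarithmic-derivative identities for $q$-products and the fact that $[3i]$ is invertible mod $\Phi_n(q)$ for $1\le i\le(n+1)/3$ (because $3i\not\equiv0\pmod n$) are the tools that make this tractable. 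A secondary but routine check is verifying that the parameters $c,e$ remain genuinely free throughout — i.e. that no step secretly requires $c$ or $e$ to be a power of $q$ — which should be immediate since they enter Watson's transformation as generic parameters and the $a$-specialization only touches $a$.
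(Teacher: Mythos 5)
Your overall strategy (creative microscoping with an auxiliary parameter, Watson's $_8\phi_7$ transformation to produce the terminating $_4\phi_3$ on the right, and a final Chinese-remainder gluing with an $[n]$-divisibility statement) is the right family of ideas and is broadly what the paper does. However, there is a genuine gap in the precision bookkeeping, and it traces back to your using only \emph{one} auxiliary parameter. Inserting a single parameter $a$ as $(aq^{-1},q^{-1}/a;q^3)_k/(aq^3,q^3/a;q^3)_k$ and proving an identity at the two points $a=q^{n}$ and $a=q^{-n}$ yields a congruence modulo $(1-aq^n)(a-q^n)$ only, hence modulo $\Phi_n(q)^2$ after $a\to1$; nothing in your steps (1)--(3) produces the third, let alone the fourth, power of $\Phi_n(q)$ that the modulus $[n]\Phi_n(q)^3\supseteq\Phi_n(q)^4$ requires. (Your step (2) observation about the $aq^{-1}$, $q^{-1}/a$ pairing only explains why the two truncation points $M=(n+1)/3$ and $M=n-1$ give the same sum; it does not supply an extra factor of $\Phi_n(q)$ dividing the difference of the two sides for generic $a$.) The paper circumvents this by working with \emph{two} auxiliary parameters $a$ and $b$: a lemma quoted from earlier work gives the congruence modulo $\Phi_n(q)(1-aq^n)(a-q^n)$, Watson's transformation gives an identity at $b=q^n$ (hence a congruence modulo $b-q^n$), and the Chinese remainder theorem for the coprime polynomials $\Phi_n(q)(1-aq^n)(a-q^n)$ and $b-q^n$ glues these two; letting $b\to1$ and then $a\to1$ (the L'Hospital step, which is indeed where $-[n]^2\sum_i q^{3i}/[3i]^2$ emerges, as you anticipated) yields the congruence modulo $\Phi_n(q)^4$.

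Second, your final gluing does not reach the stated modulus even if your claimed $\Phi_n(q)^3$ precision were granted: $\Phi_n(q)^3\cdot\left([n]/\Phi_n(q)\right)=[n]\Phi_n(q)^2$, which is one power short of $[n]\Phi_n(q)^3$. The correct arithmetic is to prove the congruence modulo $\Phi_n(q)^4$ and, separately, modulo $[n]$ (in the paper this is Lemma 3.2, imported from earlier work rather than derived from a reversal symmetry, though that is how such lemmas are typically proved); the least common multiple of $[n]$ and $\Phi_n(q)^4$ is exactly $[n]\Phi_n(q)^3$. To repair your proposal you would need to (i) introduce the second parameter $b$, placed as $(q^{-1}/b;q^3)_k/(bq^3;q^3)_k$ with the summand multiplied by $b^k$, so that Watson's transformation applies at $b=q^n$ and the Chinese remainder theorem can be invoked for $\Phi_n(q)(1-aq^n)(a-q^n)$ and $b-q^n$, and (ii) redo the final modulus count as $\mathrm{lcm}\left([n],\Phi_n(q)^4\right)$.
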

Setting $c\to 1$, $e\to 1$ in Theorem \ref{T.1}, we obtain a  partial $q$-analogue of Liu's congruence \eqref{E.4} as follows.
\begin{corollary}
Let $n\equiv 2 \pmod 3$ be a positive integer. Then, modulo ${[n]\Phi_n(q)^3}$,
\begin{align}
&\sum_{k=0}^M[6k-1]\frac{\left(q^{-1};q^3\right)^6_k}{\left(q^3;q^3\right)^6_k}q^{9k}\nonumber\\
&\:\:\equiv[n]q^{(n+1)/3}\frac{\left(q^{-2};q^3\right)_{(n+1)/3}}{\left(q^3;q^3\right)_{(n+1)/3}}\left(1-[n]^2\sum_{i=1}^{(n+1)/3}\frac{q^{3i}}{[3i]^2}\right)
\sum_{k=0}^{(n+1)/3}\frac{\left(q^4;q^3\right)_k\left(q^{-1};q^3\right)^3_k}{\left(q^3;q^3\right)^3_k\left(q^{-2};q^3\right)_k}q^{3k}.
\end{align}
\end{corollary}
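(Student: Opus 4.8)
The plan is to obtain the corollary as the single parameter specialization $c\to1$, $e\to1$ of Theorem \ref{T.1}. Since Theorem \ref{T.1} is already available, there is no fresh analytic content: the task is to check that the parameter-dependent $q$-shifted factorials collapse to the shape claimed, and then to note that the limit may legitimately be taken inside a congruence modulo $[n]\Phi_n(q)^3$.

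For the first part I would substitute $c=e=1$ and simplify. On the left-hand side $\left(cq^{-1},eq^{-1};q^3\right)_k$ becomes $\left(q^{-1};q^3\right)_k^{2}$, the factor $\left(q^3/c,q^3/e;q^3\right)_k$ becomes $\left(q^3;q^3\right)_k^{2}$, and $\left(q^9/(ce)\right)^k$ becomes $q^{9k}$, so the summand turns into $[6k-1]\left(q^{-1};q^3\right)_k^{6}\big/\left(q^3;q^3\right)_k^{6}\,q^{9k}$, which is exactly the left-hand side of the corollary. On the right-hand side $\left(q^4/(ce);q^3\right)_k$ becomes $\left(q^4;q^3\right)_k$ and $\left(q^3/c,q^3/e,q^3,q^{-2};q^3\right)_k$ becomes $\left(q^3;q^3\right)_k^{3}\left(q^{-2};q^3\right)_k$, while the prefactor $[n]q^{(n+1)/3}\left(q^{-2};q^3\right)_{(n+1)/3}\big/\left(q^3;q^3\right)_{(n+1)/3}$ and the correction factor $1-[n]^2\sum_{i=1}^{(n+1)/3}q^{3i}/[3i]^2$ are untouched; this reproduces the right-hand side of the corollary term by term. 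Thus the two displayed sides of the corollary are literally the $c=e=1$ evaluations of the two sides of Theorem \ref{T.1}, for either admissible value of $M$.

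For the second part I would argue that this evaluation is permissible inside the congruence. Writing the difference of the two sides of Theorem \ref{T.1} as $[n]\Phi_n(q)^3\,R(q,c,e)$ with $R$ a rational function, it suffices to observe that, after $c=e=1$, both sides of the corollary are still well-defined modulo $[n]\Phi_n(q)^3$: for $k$ in the relevant ranges ($k\le M\le n-1$ on the left, $k\le(n+1)/3$ on the right) the quotients of $q$-shifted factorials appearing there are $\Phi_n(q)$- and $[n]$-adically integral, because $n\equiv2\pmod3$ makes $3$ invertible modulo every divisor of $n$, so a cyclotomic factor $\Phi_d(q)$ with $d\mid n$ can enter a denominator $\left(q^3;q^3\right)_k$ only at an index already cancelled by a matching factor in the corresponding numerator. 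Hence $R(q,1,1)$ lies in the same localization and the congruence descends. I expect no genuine obstacle: all the difficulty — Watson's $_8\phi_7$ transformation, the creative-microscoping refinement, and the Chinese remainder theorem step — has already been spent on Theorem \ref{T.1}, and the only point deserving a line of care is this integrality/limit check, which is routine.
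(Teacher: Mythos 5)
Your proposal is correct and coincides with the paper's own treatment: the corollary is obtained exactly by setting $c\to 1$, $e\to 1$ in Theorem \ref{T.1}, and your verification that the $q$-shifted factorials collapse to the stated sixth powers and that $\left(q^4/(ce);q^3\right)_k$ becomes $\left(q^4;q^3\right)_k$ is exactly what is needed. The extra paragraph on the admissibility of the specialization inside the congruence is a reasonable (if routine) precaution that the paper leaves implicit.
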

Moreover, putting $c=q^{4}$ and $e=q^{\frac{5}{2}}$ in Theorem 2.1, we have the following result.
\begin{corollary}
Let $n \equiv 2\pmod 3$ be a positive integer. Then, modulo $[n] \Phi_{n}(q)^{3}$,
\begin{align}
&\sum_{k=0}^{M}[6k-1]\frac{\left(q^{-1};q^3\right)^3_k\left(q^{3/2};q^3\right)_k}{\left(q^3;q^3\right)^3_k\left(q^{1/2};q^3\right)_k}q^{5k/2}\nonumber\\
&\:\:\equiv[n]q^{(n+1)/3}\frac{\left(q^{-2};q^3\right)_{(n+1)/3}}{\left(q^3;q^3\right)_{(n+1)/3}}\left(1-[n]^2\sum_{i=1}^{(n+1)/3}\frac{q^{3i}}{[3i]^2}\right)
\sum_{k=0}^{(n+1)/3}\frac{\left(q^{-5/2};q^3\right)_k\left(q^{-1};q^3\right)^2_k}{\left(q^3,q^{1/2},q^{-2};q^3\right)_k}q^{3k}. \label{Q1}
\end{align}
\end{corollary}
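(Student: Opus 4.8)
The plan is to obtain this corollary as a direct specialization of Theorem \ref{T.1}: one puts $c=q^{4}$ and $e=q^{5/2}$ and then simplifies the $q$-shifted factorials on both sides. Because half-integer powers of $q$ now appear, the natural setting is the field $\mathbb{Q}(q^{1/2})$; note that $[n]$ and $\Phi_n(q)$ remain ordinary polynomials in $q$, so the congruence statement modulo $[n]\Phi_n(q)^{3}$ is unaffected. The only genuinely non-mechanical point is to confirm that this substitution is legitimate in a congruence modulo $[n]\Phi_n(q)^{3}$ --- that it introduces no pole and cancels no power of $\Phi_n(q)$ --- and I address that at the end.

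Granting the substitution, I would first simplify the left-hand side of Theorem \ref{T.1}. Since $cq^{-1}=q^{3}$, the factor $\left(cq^{-1};q^{3}\right)_{k}$ becomes $\left(q^{3};q^{3}\right)_{k}$ and cancels one of the four copies of $\left(q^{3};q^{3}\right)_{k}$ in the denominator; since $q^{3}/c=q^{-1}$, the factor $\left(q^{3}/c;q^{3}\right)_{k}$ becomes $\left(q^{-1};q^{3}\right)_{k}$ and cancels one of the four copies of $\left(q^{-1};q^{3}\right)_{k}$ in the numerator; and $eq^{-1}=q^{3/2}$, $q^{3}/e=q^{1/2}$, while $q^{9}/(ce)=q^{9-4-5/2}=q^{5/2}$. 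Hence the $k$-th summand on the left of Theorem \ref{T.1} turns into exactly $[6k-1]\,\dfrac{\left(q^{-1};q^{3}\right)^{3}_{k}\left(q^{3/2};q^{3}\right)_{k}}{\left(q^{3};q^{3}\right)^{3}_{k}\left(q^{1/2};q^{3}\right)_{k}}\,q^{5k/2}$, which is the left side of \eqref{Q1}.

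For the right-hand side, the prefactor $[n]q^{(n+1)/3}\dfrac{\left(q^{-2};q^{3}\right)_{(n+1)/3}}{\left(q^{3};q^{3}\right)_{(n+1)/3}}\bigl(1-[n]^{2}\sum_{i=1}^{(n+1)/3}q^{3i}/[3i]^{2}\bigr)$ involves neither $c$ nor $e$ and is therefore unchanged. In the accompanying sum one has $q^{4}/(ce)=q^{-5/2}$, $q^{3}/c=q^{-1}$, $q^{3}/e=q^{1/2}$, so the summand $\dfrac{\left(q^{4}/ce;q^{3}\right)_{k}\left(q^{-1};q^{3}\right)^{3}_{k}}{\left(q^{3}/c,q^{3}/e,q^{3},q^{-2};q^{3}\right)_{k}}q^{3k}$ becomes $\dfrac{\left(q^{-5/2};q^{3}\right)_{k}\left(q^{-1};q^{3}\right)^{3}_{k}}{\left(q^{-1},q^{1/2},q^{3},q^{-2};q^{3}\right)_{k}}q^{3k}=\dfrac{\left(q^{-5/2};q^{3}\right)_{k}\left(q^{-1};q^{3}\right)^{2}_{k}}{\left(q^{1/2},q^{3},q^{-2};q^{3}\right)_{k}}q^{3k}$, which is the right side of \eqref{Q1}. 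Since Theorem \ref{T.1} holds for $M=(n+1)/3$ just as for $M=n-1$, so does \eqref{Q1}.

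It remains to justify the specialization, and this is the step I expect to be the only real obstacle --- albeit a mild one. Viewing the congruence in Theorem \ref{T.1} as an identity of rational functions in $c,e$ over $\mathbb{Q}(q^{1/2})$, one must check that neither side acquires a pole at $c=q^{4}$, $e=q^{5/2}$ and that the displayed divisibility by $[n]\Phi_n(q)^{3}$ survives. The only denominators depending on $c,e$ are $\left(q^{3}/c;q^{3}\right)_{k}$ and $\left(q^{3}/e;q^{3}\right)_{k}$; at the chosen values these are $\left(q^{-1};q^{3}\right)_{k}$ and $\left(q^{1/2};q^{3}\right)_{k}$, each a nonzero element of $\mathbb{Z}[q^{\pm 1/2}]$, and the first is absorbed entirely by one of the four factors $\left(q^{-1};q^{3}\right)_{k}$ already present in the numerator, so no denominator divisible by $[n]\Phi_n(q)$ is created. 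This is the same verification one makes for the preceding corollary (where instead $c,e\to1$), and it is routine; everything else is bookkeeping with $q$-shifted factorials.
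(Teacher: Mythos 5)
Your proof is correct and is exactly the paper's route: the paper obtains this corollary precisely by setting $c=q^{4}$, $e=q^{5/2}$ in Theorem \ref{T.1}, and your simplifications of the $q$-shifted factorials on both sides match the stated result. The extra care you take about working in $\mathbb{Q}(q^{1/2})$ and checking that no denominator divisible by $\Phi_n(q)$ is introduced is a sensible addition that the paper leaves implicit.
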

Recently, Liu \cite{LJC2} proved that, for $p\equiv2\pmod3$,
\begin{equation}\label{EQU9}
\sum_{k=0}^{(p+1)/3}\left(6k-1\right)\frac{\left(-1/3\right)^4_k(1)_{2k}}{(1)^4_k\left(-2/3\right)_{2k}}
=\sum_{k=0}^{(p+1)/3}\left(6k-1\right)\frac{\left(-1/3\right)^3_k(1/2)_{k}}{(1)^3_k\left(1/6\right)_{k}}.
\end{equation}
From \eqref{EQU9}, we can deduce that \eqref{Q1} is a $q$-analogue of Liu's congruence \eqref{EQU2}.

Furthermore, letting $q\rightarrow q^2$ and $c=e=q^7$ in Theorem \ref{T.1}, we obtain a new result as follows.
\begin{corollary}\label{Cor1}
Let  $n\equiv2\pmod3$ be a positive integer. Then, modulo ${[n]_{q^2}\Phi_n(q^2)^3}$,
\begin{equation*}\label{EQU6}
\sum_{k=0}^M[6k-1]_{q^2}[6k-1]^2\frac{\left(q^{-2};q^6\right)^4_k}{{\left(q^6;q^6\right)^4_k}}q^{4k}
\equiv\frac{-2[n]_{q^2}q^{\frac{2n-7}{3}}\left(q^{-4};q^6\right)_{(n+1)/3}}{\left(1+q^{-2}\right)\left(q^{6};q^6\right)_{(n+1)/3}}
\left(1-[n]^2_{q^2}\sum_{i=1}^{(n+1)/3}\frac{q^{6i}}{[3i]^2_{q^2}}\right).
\end{equation*}
\end{corollary}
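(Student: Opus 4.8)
The plan is to deduce Corollary~\ref{Cor1} from Theorem~\ref{T.1} by a direct substitution, with no additional tools needed. First I would replace $q$ by $q^2$ throughout the congruence in Theorem~\ref{T.1}, so that the modulus becomes $[n]_{q^2}\Phi_n(q^2)^3$, and then set $c=e=q^7$. Under these choices the $q$-shifted factorials on the left become $(q^{-1};q^3)^4_k\mapsto(q^{-2};q^6)^4_k$, $(q^3;q^3)^4_k\mapsto(q^6;q^6)^4_k$, $(cq^{-1},eq^{-1};q^3)_k\mapsto(q^5;q^6)^2_k$, $(q^3/c,q^3/e;q^3)_k\mapsto(q^{-1};q^6)^2_k$ and $(q^9/(ce))^k\mapsto q^{4k}$, while the factor $[6k-1]$ out front becomes $[6k-1]_{q^2}$. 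The key simplification on the left is the telescoping
\begin{equation*}
\frac{(q^5;q^6)_k}{(q^{-1};q^6)_k}=\frac{1-q^{6k-1}}{1-q^{-1}}=-q\,[6k-1],
\end{equation*}
so that the two such ratios together contribute $q^2[6k-1]^2$; hence the left-hand side of the substituted congruence equals $q^2$ times the left-hand side of Corollary~\ref{Cor1}.

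For the right-hand side, the decisive feature of the specialization $c=e=q^7$ is that $q^4/(ce)\mapsto q^{-6}$, and therefore $(q^{-6};q^6)_k=0$ for all $k\ge 2$. Consequently the terminating sum
\begin{equation*}
\sum_{k=0}^{(n+1)/3}\frac{(q^4/(ce);q^3)_k(q^{-1};q^3)^3_k}{(q^3/c,q^3/e,q^3,q^{-2};q^3)_k}q^{3k}
\end{equation*}
collapses to its $k=0$ and $k=1$ terms. Computing the $k=1$ term by means of $1-q^{-2}=(1-q^{-1})(1+q^{-1})$ and $1-q^{-4}=(1-q^{-2})(1+q^{-2})$ gives $-(1+q^{-1})^2/(1+q^{-2})$, so the whole sum equals $1-(1+q^{-1})^2/(1+q^{-2})=-2q^{-1}/(1+q^{-2})$. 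Multiplying by the remaining prefactor $[n]_{q^2}q^{2(n+1)/3}(q^{-4};q^6)_{(n+1)/3}/(q^6;q^6)_{(n+1)/3}$ and by the factor $1-[n]^2_{q^2}\sum_{i=1}^{(n+1)/3}q^{6i}/[3i]^2_{q^2}$, and noting $2(n+1)/3-1=(2n-7)/3+2$, shows that the right-hand side of the substituted congruence is likewise $q^2$ times the right-hand side of Corollary~\ref{Cor1}.

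It then only remains to cancel the common factor $q^2$: since $q^2$ is a unit, it may be removed from both sides of a congruence modulo $[n]_{q^2}\Phi_n(q^2)^3$, and this produces exactly the claimed congruence, for either admissible value $M=(n+1)/3$ or $M=n-1$ inherited from Theorem~\ref{T.1}. Conceptually nothing is hard here; the two points requiring care are (i) recognizing that $c=e=q^7$ is chosen precisely so that $q^4/(ce)=q^{-6}$, which truncates the right-hand sum after two terms, and (ii) keeping the powers of $q$ straight, in particular not dropping the extra factor $q^2$ that has to be divided out at the end. Thus the only real obstacle is executing the $q$-Pochhammer bookkeeping without slips, which is routine.
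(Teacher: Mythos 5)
Your proposal is correct and follows exactly the route the paper intends: the paper derives Corollary~\ref{Cor1} by the one-line instruction ``let $q\to q^2$ and $c=e=q^7$ in Theorem~\ref{T.1},'' and you have simply supplied the omitted bookkeeping. Your key simplifications — the ratio $(q^5;q^6)_k/(q^{-1};q^6)_k=-q[6k-1]$ on the left, the truncation of the right-hand sum via $(q^{-6};q^6)_k=0$ for $k\ge 2$ yielding $-2q^{-1}/(1+q^{-2})$, and the cancellation of the common unit $q^2$ — all check out.
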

By using the following  congruence from \cite{Lehmer}:
\begin{equation}
\sum_{k=1}^{\lfloor p/3\rfloor}\frac{1}{k^2}\equiv\frac{1}{2}\left(\frac{-3}{p}\right)B_{p-2}\left(1/3\right)\pmod{p},
\end{equation}
and letting $n=p$ with  $p\equiv2\pmod3$ and $q\rightarrow1$ in Corollary \ref{Cor1}, we can get the supercongruence:
for $p\equiv2\pmod3$, modulo $p^4$,
\begin{align}
\sum_{k=0}^{(p+1)/3}(6k-1)^3\frac{(-1/3)^4_k}{k!^4}
\equiv(-1)^{(p-2)/3}p\Gamma^2_p\left(2/3\right)\left(1-p^2-\frac{p^2}{18}\left(\frac{-3}{p}\right)B_{p-2}\left(1/3\right)\right),\label{EQU7}
\end{align}
where $\left(\frac{\cdot}{p}\right)$ denotes the Legendre symbol.

Moreover, taking $ce=q^4$ in Theorem \ref{T.1},
 we get the following  $q$-supercongruence.
 \begin{corollary}\label{Cor2}
Let $n\equiv 2 \pmod 3$ be a positive integer. Then, modulo ${[n]\Phi_n(q)^3}$,
\begin{align}
\sum_{k=0}^M[6k-1]\frac{\left(q^{-1};q^3\right)^4_k}{\left(q^3;q^3\right)^4_k}q^{5k}
\equiv[n]q^{(n+1)/3}\frac{\left(q^{-2};q^3\right)_{(n+1)/3}}{\left(q^3;q^3\right)_{(n+1)/3}}
\left(1-[n]^2\sum_{i=1}^{(n+1)/3}\frac{q^{3i}}{[3i]^2}\right).\nonumber
\end{align}
\end{corollary}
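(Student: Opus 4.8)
The plan is to obtain Corollary~\ref{Cor2} directly from Theorem~\ref{T.1} by the single specialization $ce=q^4$, that is, by substituting $e=q^4/c$ and keeping $c$ as a free parameter. The first thing to note is that the congruence in Theorem~\ref{T.1} is an identity between rational functions of $q$, $c$, $e$ whose denominators are built from the $q$-shifted factorials $\left(q^3/c,q^3/e,q^3,q^{-2};q^3\right)_k$ with $0\le k\le M$; for $n\equiv2\pmod3$ the factors $(q^3;q^3)_k$ and $(q^{-2};q^3)_k$ are coprime to $\Phi_n(q)$ in this range because the exponents involved are never multiples of $n$, while $\left(q^3/c,q^3/e;q^3\right)_k$ stays coprime to $[n]\Phi_n(q)$ for generic $c$. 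Restricting to the hypersurface $e=q^4/c$ leaves $c$ free, so this coprimality persists and the congruence remains valid after the substitution.

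Next I would simplify the left-hand side of Theorem~\ref{T.1}. Under $e=q^4/c$ one computes $eq^{-1}=q^3/c$ and $q^3/e=cq^{-1}$, so $\left(cq^{-1},eq^{-1};q^3\right)_k=\left(q^3/c,q^3/e;q^3\right)_k$ and the quotient of these two $q$-shifted factorials is identically $1$; moreover $q^9/(ce)=q^5$. Hence the left-hand side of Theorem~\ref{T.1} collapses to $\sum_{k=0}^M[6k-1]\frac{(q^{-1};q^3)_k^4}{(q^3;q^3)_k^4}q^{5k}$, which is precisely the left-hand side of Corollary~\ref{Cor2}.

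Then I would simplify the right-hand side. With $ce=q^4$ we have $(q^4/ce;q^3)_k=(1;q^3)_k$, and this equals $1$ when $k=0$ but vanishes for every $k\ge1$ since the factor $1-q^{3\cdot0}=0$ occurs. Therefore the inner sum $\sum_{k=0}^{(n+1)/3}\frac{(q^4/ce;q^3)_k(q^{-1};q^3)_k^3}{(q^3/c,q^3/e,q^3,q^{-2};q^3)_k}q^{3k}$ reduces to its $k=0$ term, which is $1$, and the right-hand side of Theorem~\ref{T.1} becomes $[n]q^{(n+1)/3}\frac{(q^{-2};q^3)_{(n+1)/3}}{(q^3;q^3)_{(n+1)/3}}\bigl(1-[n]^2\sum_{i=1}^{(n+1)/3}\frac{q^{3i}}{[3i]^2}\bigr)$. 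Matching the two sides yields Corollary~\ref{Cor2}.

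I do not expect a substantive obstacle here: all the real work is contained in Theorem~\ref{T.1}, and the corollary is a pure specialization. The only point deserving genuine attention is the legitimacy of taking $ce=q^4$, i.e. checking that this substitution does not force any denominator to acquire a factor in common with the modulus $[n]\Phi_n(q)^3$; once that is verified as indicated above, the result is immediate.
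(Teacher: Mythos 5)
Your proposal is correct and is exactly the paper's route: the paper obtains Corollary~\ref{Cor2} by the same specialization $ce=q^4$ in Theorem~\ref{T.1}, under which the factors $\left(cq^{-1},eq^{-1};q^3\right)_k$ and $\left(q^3/c,q^3/e;q^3\right)_k$ cancel and the inner sum collapses to its $k=0$ term via $(1;q^3)_k=0$ for $k\ge1$. Your extra care about denominators staying coprime to the modulus is a reasonable check but does not change the argument.
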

Letting $n=p$ with  $p\equiv2\pmod3$ and $q\rightarrow1$ in Corollary \ref{Cor2}, we obtain a new congruence: for  $p\equiv2\pmod3$,
 modulo $p^4$,
\begin{align}
\sum_{k=0}^{(p+1)/3}(6k-1)\frac{(-1/3)^4_k}{k!^4}
\equiv (-1)^{(p+1)/3}p\Gamma^2_p\left(2/3\right)\left(1-p^2-\frac{p^2}{18}\left(\frac{-3}{p}\right)B_{p-2}\left(1/3\right)\right).\label{EQU8}
\end{align}
Combining  \eqref{EQU7} and \eqref{EQU8}, we can get a new and rare supercongruence:  for  $p\equiv2\pmod3$,
\begin{align}
\sum_{k=0}^{(p+1)/3}(6k-1)(18k^2-6k+1)\frac{(-1/3)^4_k}{k!^4}\equiv0\pmod{p^4}.
\end{align}

\section{Proof of Theorem \ref{T.1} }\label{se2}
In fact, the proof of Theorem \ref{T.1} can be transformed into confirming the following generalized theorem.
\begin{theorem}\label{Thm1}
Let $n>1$, $d\ge 2$ be integers with  $n\equiv r \pmod d$ and $r\in \left\{1,-1\right\}$.
Then, modulo ${[n]\Phi_n(q)^3}$,
\begin{equation}\label{eq1}
\begin{split}
&\sum_{k=0}^W[2dk+r]\frac{\left(q^r;q^d\right)^4_k\left(cq^r,eq^r;q^d\right)_k}{{\left(q^d;q^d\right)^4_k}
\left(q^d/c,q^d/e;q^d\right)_k}(ce)^{-k}q^{(2d-3r)k}\\
&\equiv[n]\frac{q^{r(r-n)/d}}{\left(q^d;q^d\right)_{(n-r)/d}}\left
(1-[n]^2\sum_{i=1}^{(n-r)/d}\frac{q^{di}}{[di]^2}\right)\\
&\quad\quad\times\sum_{k=0}^{(n-r)/d}\frac{\left(q^{2r+dk};q^d\right)_{(n-r)/d-k}(q^{d-r}/{ce};q^d)_k\left(q^{r};q^d\right)^3_k}
{\left(q^d/c,q^d/e,q^d;q^d\right)_k}q^{dk},
\end{split}
\end{equation}
here and in what follows $W=(n-r)/d$ or $n-1$.
\end{theorem}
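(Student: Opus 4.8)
The plan is to establish Theorem~\ref{Thm1} by the ``creative microscoping'' method of Guo and Zudilin, treating the parameters $c,e$ together with an auxiliary parameter $a$. Concretely, I would introduce the parametrized sum
\begin{equation*}
S_n(a,c,e)=\sum_{k=0}^W[2dk+r]\frac{(aq^r;q^d)_k(q^r/a;q^d)_k(q^r;q^d)^2_k(cq^r,eq^r;q^d)_k}{(aq^d;q^d)_k(q^d/a;q^d)_k(q^d;q^d)^2_k(q^d/c,q^d/e;q^d)_k}(ce)^{-k}q^{(2d-3r)k},
\end{equation*}
so that the left-hand side of \eqref{eq1} is $S_n(1,c,e)$. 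The first step is to show that $S_n(a,c,e)$, viewed as a function of $a$, is congruent modulo $\Phi_n(q)^2$ (for the truncation $W=(n-r)/d$ one needs that the terms with $k>(n-r)/d$ vanish, since $(q^r;q^d)_k$ acquires the factor $1-q^{n}\equiv 0$; similarly for $W=n-1$) to a closed form obtained by applying Watson's $_8\phi_7$ transformation. Summing the resulting very-well-poised $_8\phi_7$ by the terminating $q$-analogue of Dougall's $_6\phi_5$ (or reindexing Watson to a balanced $_4\phi_3$) produces the right-hand side expression in \eqref{eq1} but with $a$-dependence; evaluating at the three ``magic'' values $a=1$, $a=q^n$, $a=q^{-n}$ then pins the congruence down. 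This is the standard skeleton, so I will not belabor the hypergeometric manipulations.

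The substantive steps are the following. First, one shows that for $a=q^n$ or $a=q^{-n}$ the left-hand sum $S_n(a,c,e)$ terminates early and equals the right-hand side of \eqref{eq1} \emph{exactly} (as rational functions in $q,c,e$), by a direct application of Watson's transformation followed by the $q$-Dougall summation; this uses that $(q^r/a;q^d)_k=(q^{r-n};q^d)_k$ becomes zero for $k>(n-r)/d$. Second, one shows that $S_n(a,c,e)\equiv S_n(1,c,e)\pmod{\Phi_n(q)^3}$ as polynomials in $a$ — more precisely that the difference is divisible by $(1-aq^n)(1-a^{-1}q^n)$, hence by $\Phi_n(q)^2$, and that a further factor of $\Phi_n(q)$ comes from the $[n]$ already present — by the usual argument that each summand, after symmetrization in $a\leftrightarrow a^{-1}$, has a factor of $\Phi_n(q)$ coming from the numerator $q$-shifted factorials. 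Third — and this is where the fourth power and the Bernoulli-type correction term enter — one must compute the right-hand side of \eqref{eq1} at $a=q^n$ and $a=q^{-n}$ and extract the expansion to order $\Phi_n(q)^2$; the factor $1-[n]^2\sum_{i=1}^{(n-r)/d}q^{di}/[di]^2$ is exactly the second-order term in this expansion, and the companion sum $\sum_{k=0}^{(n-r)/d}(\cdots)q^{dk}$ collects what survives. Finally, the Chinese remainder theorem for coprime polynomials glues the congruence modulo $\Phi_n(q)^3$ (from the $a$-parametrization) with the congruence modulo $[n]$ (from the $d\mid$-structure of $[2dk+r]$ and the vanishing of the tail), using that $\gcd([n]/\Phi_n(q),\Phi_n(q))=1$, to obtain the stated modulus $[n]\Phi_n(q)^3$.

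The main obstacle I anticipate is the third step: after applying Watson's $_8\phi_7$ transformation and the $q$-Dougall evaluation at $a=q^{\pm n}$, one does not immediately see the factor $1-[n]^2\sum q^{di}/[di]^2$; it must be produced by carefully expanding a ratio of $q$-shifted factorials of the form $(aq^{d};q^d)_{(n-r)/d}/(q^{d}/a;q^d)_{(n-r)/d}$ around $a=q^{\pm n}$ and identifying the logarithmic-derivative-type sum. Controlling this expansion modulo $\Phi_n(q)^2$ — equivalently, showing that the two evaluations $a=q^n$ and $a=q^{-n}$ contribute the \emph{same} second-order correction, so that the $\Phi_n(q)^2$-term is well defined — is the delicate point, and it is what upgrades the congruence from modulo $\Phi_n(q)^2$ to modulo $\Phi_n(q)^3$. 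Everything else (the early termination, the divisibility by $\Phi_n(q)$ of individual summands, the CRT gluing) is routine once this expansion is in hand.
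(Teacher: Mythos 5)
Your overall skeleton (creative microscoping in an auxiliary parameter, Watson's ${}_8\phi_7$ transformation to evaluate the sum at the termination points, and a final gluing with the modulus $[n]$) is the right family of ideas, but as written the proposal falls one power of $\Phi_n(q)$ short, and the shortfall is structural rather than cosmetic. With a single auxiliary parameter $a$ you can prove a congruence modulo $(1-aq^n)(a-q^n)$, possibly times one extra factor $\Phi_n(q)$ holding uniformly in $a$; specializing $a\to 1$ then yields at best $\Phi_n(q)^3$. Your own accounting reflects this: you claim the congruence modulo $\Phi_n(q)^3$ and then propose to combine it with the congruence modulo $[n]$. But since $[n]$ contains exactly one factor of $\Phi_n(q)$, the least common multiple of $[n]$ and $\Phi_n(q)^3$ is $[n]\Phi_n(q)^2$, not $[n]\Phi_n(q)^3$. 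To reach the stated modulus one needs the congruence modulo $\Phi_n(q)^4$, and that is precisely what a one-parameter microscoping cannot deliver here.

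The paper closes this gap by introducing a \emph{second} auxiliary parameter $b$, inserted as $q^r/b$ in the numerator and $bq^d$ in the denominator of each summand. It proves two congruences for the resulting two-parameter sum: one modulo $\Phi_n(q)(1-aq^n)(a-q^n)$ (Lemma~\ref{la1}, quoted from \cite{L}) and one modulo $b-q^n$ (Lemma~\ref{la2}, which is where Watson's transformation is actually used: setting $b=q^n$ makes the series terminate so that the ${}_8\phi_7\to{}_4\phi_3$ identity applies exactly). The Chinese remainder theorem for the coprime polynomials $\Phi_n(q)(1-aq^n)(a-q^n)$ and $b-q^n$ glues these into a single congruence modulo their product; letting $b\to 1$ then turns the factor $b-q^n$ into $1-q^n$ and harvests one more power of $\Phi_n(q)$, giving Theorem~\ref{Thm2} modulo $\Phi_n(q)^2(1-aq^n)(a-q^n)$. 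Only then does one let $a\to 1$ (and it is the L'Hospital limit at this stage, not an expansion around $a=q^{\pm n}$, that produces the factor $1-[n]^2\sum_{i}q^{di}/[di]^2$), obtaining the congruence modulo $\Phi_n(q)^4$, whose least common multiple with $[n]$ (supplied by Lemma~\ref{la3}) is the desired $[n]\Phi_n(q)^3$. So the CRT in the paper is applied in the $b$-variable to upgrade the cyclotomic power, not at the end to merge with $[n]$; without this second parameter your argument cannot reach the fourth power.
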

Clearly, when $d=3$, $r=-1$, Theorem \ref{Thm1} reduces to Theorem \ref{T.1}. Actually, by making appropriate parameter substitutions in Theorem \ref{Thm1},
more results can be obtained. For example, letting $d=3, r=1$, $c\rightarrow1, e\rightarrow1$ and $q\rightarrow1$ in Theorem \ref{Thm1}, we can reprove Van Hamme's (D.2).

In the process of proving Theorem \ref{Thm1}, we shall utilize Watson's $_8\phi_7$ transformation formula \cite{GR}:
\begin{align}\label{eq6}
& _{8}\phi_{7}\!\left[\begin{array}{cccccccc}
a,& qa^{\frac{1}{2}},& -qa^{\frac{1}{2}}, & b,    & c,    & d,    & e,    & q^{-n} \\
  & a^{\frac{1}{2}}, & -a^{\frac{1}{2}},  & aq/b, & aq/c, & aq/d, & aq/e, & aq^{n+1}
\end{array};q,\, \frac{a^2q^{n+2}}{bcde}
\right] \notag\\[5pt]
&\quad =\frac{(aq, aq/de;q)_n}
{(aq/d, aq/e;q)_n}
\,{}_{4}\phi_{3}\!\left[\begin{array}{c}
aq/bc,\ d,\ e,\ q^{-n} \\
aq/b,\, aq/c,\, deq^{-n}/a
\end{array};q,\, q
\right].
\end{align}
Here, the basic hypergeometric series $_{r+1}\phi_r$, following Gasper and Rahman\cite{GR}, is defined as
	$$
	_{r+1}\phi_{r}\left[\begin{array}{c}
		a_1,a_2,\ldots,a_{r+1}\\
		b_1,b_2,\ldots,b_{r}
	\end{array};q,\, z
	\right]
	=\sum_{k=0}^{\infty}\frac{(a_1,a_2,\ldots, a_{r+1};q)_k z^k}
	{(q,b_1,\ldots,b_{r};q)_k}, \qquad\mathrm{for}\quad 0<|q|<1.
	$$

Before proving Theorem \ref{Thm1}, we first list the following two related results, which have been proved in \cite{L}.
\begin{lemma}\label{la3}
Let $d$, $n$ be positive integers with $gcd(d,n)=1$. Let $r$ be an integer and $a$, $b$, $c$, $e$ be indeterminates. Then, modulo $[n]$,
\begin{align*}
\sum_{k=0}^{m_1}[2dk+r]\frac{\left(q^r,cq^r,eq^r,q^r/b,aq^r,q^r/a;q^d\right)_k}
{\left(q^d,q^d/c,q^d/e,bq^d,q^d/a,aq^d;q^d\right)_k}
\left(\frac{b}{ce}\right)^kq^{(2d-3r)k}
&\equiv0,\\
\sum_{k=0}^{n-1}[2dk+r]\frac{\left(q^r,cq^r,eq^r,q^r/b,aq^r,q^r/a;q^d\right)_k}
{\left(q^d,q^d/c,q^d/e,bq^d,q^d/a,aq^d;q^d\right)_k}
\left(\frac{b}{ce}\right)^kq^{(2d-3r)k}
&\equiv0,
\end{align*}
where $0\le m_1\le n-1$ and $dm_1\equiv -r\pmod {n}$.
\end{lemma}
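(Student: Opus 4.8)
The plan is to prove both $q$-congruences by a root-of-unity reduction exploiting the very-well-poised shape of the summand. Since $[n]=\prod_{\ell\mid n,\ \ell>1}\Phi_\ell(q)$ and distinct cyclotomic polynomials are pairwise coprime, it is enough to fix a divisor $\ell>1$ of $n$, set $q=\zeta$ for a primitive $\ell$-th root of unity $\zeta$, and prove that each of the two finite sums then vanishes. Write $c_k$ for the $k$-th summand. Because $\gcd(d,\ell)=1$ there is a unique $N=N(\ell)\in\{0,1,\dots,\ell-1\}$ with $dN+r\equiv0\pmod\ell$, and the hypothesis $dm_1\equiv-r\pmod n$ forces $m_1\equiv N\pmod\ell$ and $\zeta^{\,r+dN}=1$.

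First I would record the elementary cancellations at $q=\zeta$. The single factor $(q^r;q^d)_k$ in the numerator of $c_k$ picks up the zero $1-\zeta^{\,r+dN}$, so $c_k$ vanishes whenever $k\bmod\ell>N$ (that extra zero is not cancelled by the denominator factor $(q^d;q^d)_k$ in this range); and if $N=0$ then $\ell\mid r$, so already $c_0=[r]_q\big|_{q=\zeta}=(1-\zeta^r)/(1-\zeta)=0$ and likewise every $c_k$ vanishes, settling that case. Assume $N\ge1$. The decisive point is the reversal relation
\[
c_{N-k}=-c_k\qquad(0\le k\le N)
\]
valid at $q=\zeta$: here $(q^r;q^d)_k$ behaves like the terminating factor $(\zeta^{-dN};\zeta^d)_k$, and in the ratio $c_{N-k}/c_k$ this factor, together with $(q^d;q^d)_k$, the five reciprocal parameter pairs $\{cq^r,q^d/c\}$, $\{eq^r,q^d/e\}$, $\{q^r/b,bq^d\}$, $\{aq^r,q^d/a\}$, $\{q^r/a,aq^d\}$, the factor $[2dk+r]$ and the power of $q$ in the argument $bq^{2d-3r}/(ce)$, each contribute an explicit power of $\zeta$ which — using $\zeta^\ell=1$ and $\zeta^{\,r+dN}=1$ — collapse, together with the accumulated signs, to $-1$. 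Summing over $k$ gives $\sum_{k=0}^{N}c_k=-\sum_{k=0}^{N}c_k$, hence $\sum_{k=0}^{N}c_k=0$; this is consistent when $N$ is even, since then $k=N/2$ already kills the term via $[2d(N/2)+r]=[dN+r]\equiv0$.

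It remains to deduce the vanishing of the two sums actually appearing in the statement. When $\ell=n$ one has $m_1=N$, so the first sum is exactly $\sum_{k=0}^{N}c_k=0$. For proper divisors $\ell<n$, and for the sum running to $n-1$, the summand at $q=\zeta$ has a quasi-periodic block structure of period $\ell$ (within each length-$\ell$ block the surviving indices are those $k$ with $k\bmod\ell\le N$, the factors $(q^r;q^d)_k$ and $(q^d;q^d)_k$ gaining matching single zeros per block); organising the sum into these blocks and using the reversal relation within a block together with the block-to-block relation of the summand, everything telescopes to $0$. Assembling these conclusions over all $\ell\mid n$ with $\ell>1$ gives both congruences modulo $[n]$. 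I expect the hardest step to be the reversal identity $c_{N-k}=-c_k$ and the block bookkeeping that extends it to the two stated upper limits: this is where the very-well-poised arrangement of the parameters, the particular value of the argument, and the congruence $dN+r\equiv0\pmod\ell$ all have to be used at once, and where the case of a proper divisor $\ell<n$ and the two different summation ranges require genuine care.
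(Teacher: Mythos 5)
The paper does not actually prove Lemma~\ref{la3}: it is quoted verbatim from the reference [L] (Liu and Wang), so there is no in-paper argument to compare against. That said, your root-of-unity reduction is precisely the standard ``creative microscoping'' proof of such statements (and is the route taken in [L] and in Guo--Zudilin's framework): factor $[n]$ into cyclotomic polynomials, evaluate at a primitive $\ell$-th root of unity $\zeta$ for each $\ell\mid n$ with $\ell>1$, kill the terms with $k\bmod \ell>N$, use the reversal $c_{N-k}=-c_k$ on the surviving initial block, and handle the longer summation ranges by the periodicity $c_{j\ell+k}=c_{j\ell}\,c_k$ at $q=\zeta$. Your identification of the summand as the very-well-poised $_8W_7$ term with $a=q^r$, base $q^d$, and argument $\frac{a^2q^{2d}}{b'c'd'e'f'}=\frac{b}{ce}q^{2d-3r}$ is exactly what makes the reversal work.

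Two points are asserted rather than proved and would need to be written out: (i) the computation showing that the product of the eight ratios in $c_{N-k}/c_k$ collapses to $-1$ using $\zeta^{\ell}=1$ and $\zeta^{dN+r}=1$; and (ii) the ``block-to-block'' step, which should be made precise as the multiplicativity $c_{j\ell+k}=c_{j\ell}c_k$ at $q=\zeta$, whence each block of length $\ell$ contributes $c_{j\ell}\sum_{k=0}^{\ell-1}c_k=0$ (and the final partial block of the $m_1$-sum contributes $c_{t\ell}\sum_{k=0}^{N}c_k=0$, using $m_1\equiv N\pmod{\ell}$). Also, for a proper divisor $\ell<n$ and $k\ge\ell$ with $k\bmod\ell\le N$, the factor $(q^r;q^d)_k/(q^d;q^d)_k$ is a $0/0$ expression at $q=\zeta$, so the evaluation must be phrased as a limit $q\to\zeta$ of the reduced rational function; this is routine but should be acknowledged explicitly. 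With those computations supplied, your argument is complete and coincides with the proof in the cited source.
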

\begin{lemma}\label{la1}
Let $n>1$, $d\ge 2$, $r$ be integers with $gcd(r,d)=1$ and $n\equiv r\pmod d$ such that $n+d-nd\le r\le n$.
 Then, modulo $\Phi_n(q)(1-aq^n)(a-q^n)$,
\begin{align}
&\sum_{k=0}^{(n-r)/d}[2dk+r]\frac{(q^r,cq^r,eq^r,q^r/b,aq^r,q^r/a;q^d)_k}
{(q^d,q^d/c,q^d/e,bq^d,q^d/a,aq^d;q^d)_k}
\left(\frac{b}{ce}\right)^kq^{(2d-3r)k}\nonumber\\
&\equiv[n]\left(\frac{b}{q^r}\right)^{(n-r)/d}
\frac{\left(q^{2r}/b;q^d\right)_{(n-r)/d}}{\left(bq^d;q^d\right)_{(n-r)/d}}
\sum_{k=0}^{(n-r)/d}\frac{\left(q^{d-r}/ce,q^r/b,aq^r,q^r/a;q^d\right)_k}{\left(q^d,q^d/c,q^d/e,q^{2r}/b;q^d\right)_k}q^{dk}.\label{eq2}
\end{align}
\end{lemma}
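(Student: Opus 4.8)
We prove Lemma~\ref{la1}. The plan is to use the Chinese remainder theorem for coprime polynomials together with Lemma~\ref{la3} and Watson's transformation~\eqref{eq6}. The modulus $\Phi_n(q)(1-aq^n)(a-q^n)$ is a product of three polynomials of $\mathbb{Q}[a,q]$ that are pairwise coprime — the two linear factors $1-aq^n$ and $a-q^n$ would share a zero only if $q^{2n}=1$, and $\Phi_n(q)$ involves no $a$ — so it suffices to establish the asserted congruence modulo each of $\Phi_n(q)$, $1-aq^n$ and $a-q^n$ separately.

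Modulo $\Phi_n(q)$ one has $q^n\equiv 1$, hence $[n]\equiv 0$, so the right-hand side of~\eqref{eq2}, which carries the factor $[n]$, vanishes. It therefore remains to show that the left-hand sum vanishes modulo $\Phi_n(q)$; but $\gcd(d,n)=\gcd(r,d)=1$, and the integer $m_1:=(n-r)/d$ satisfies $dm_1\equiv -r\pmod n$ and $0\le m_1\le n-1$ (the upper bound being exactly the hypothesis $n+d-nd\le r$), so this is the reduction modulo the divisor $\Phi_n(q)$ of $[n]$ of the first congruence of Lemma~\ref{la3}.

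For the two linear factors, note first that each summand in~\eqref{eq2} is invariant under $a\mapsto 1/a$ — this map exchanges the pairs $(aq^r,q^r/a)$ and $(q^d/a,aq^d)$ and fixes everything else — so the specializations $a=q^{-n}$ and $a=q^{n}$ yield the same statement, and I would treat $a=q^{-n}$. Under this substitution $(aq^r;q^d)_k=(q^{r-n};q^d)_k$ vanishes for $k>(n-r)/d$, so the left sum is already a terminating sum; after extracting the scalar factor via $[2dk+r]=[r]\,(1-q^{r+2dk})/(1-q^{r})$ it becomes a terminating very-well-poised ${}_{8}\phi_{7}$ in base $q^{d}$ with first parameter $q^{r}$, terminating parameter $q^{r-n}$ and argument $bq^{2d-3r}/(ce)$, which is precisely the shape to which Watson's formula~\eqref{eq6} (with $q\mapsto q^{d}$) applies. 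Taking $\{cq^{r},eq^{r}\}$ to play the role of $\{b,c\}$ and $\{q^{r}/b,\,q^{r+n}\}$ the role of $\{d,e\}$ in~\eqref{eq6} rewrites the left sum as a product of $q^{d}$-shifted factorials times the terminating ${}_{4}\phi_{3}$
\[
\sum_{k}\frac{\left(q^{d-r}/ce,\,q^{r}/b,\,q^{r-n},\,q^{r+n};q^{d}\right)_{k}}{\left(q^{d},\,q^{d}/c,\,q^{d}/e,\,q^{2r}/b;q^{d}\right)_{k}}q^{dk},
\]
which is exactly the value at $a=q^{-n}$ of the ${}_{4}\phi_{3}$ on the right of~\eqref{eq2}; a short computation with $q^{d}$-shifted factorials, using $d\cdot\tfrac{n-r}{d}=n-r$, then identifies $[r]$ times the prefactor produced by~\eqref{eq6} with $[n]\,(b/q^{r})^{(n-r)/d}(q^{2r}/b;q^{d})_{(n-r)/d}/(bq^{d};q^{d})_{(n-r)/d}$. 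Hence both sides of~\eqref{eq2} agree at $a=q^{-n}$, and by the $a\mapsto 1/a$ symmetry also at $a=q^{n}$; the Chinese remainder theorem then assembles the three cases into the congruence modulo $\Phi_n(q)(1-aq^n)(a-q^n)$.

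The main obstacle is the last step: recognizing the very-well-poised ${}_{8}\phi_{7}$ and, above all, choosing the distribution of parameters in Watson's transformation for which the output ${}_{4}\phi_{3}$ coincides with the one on the right of~\eqref{eq2} — a different allocation of the free parameters produces a genuinely different ${}_{4}\phi_{3}$ — together with pinning down the precise powers of $q$ in the prefactor $(b/q^{r})^{(n-r)/d}(q^{2r}/b;q^{d})_{(n-r)/d}/(bq^{d};q^{d})_{(n-r)/d}$. A minor routine point is to check that no denominator degenerates under the specializations $a=q^{\mp n}$.
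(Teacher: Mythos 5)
Your proof is correct, and it is essentially the standard argument: the paper itself does not prove Lemma~\ref{la1} but quotes it from \cite{L}, where it is established exactly as you do — vanishing modulo $\Phi_n(q)$ via the terminating-sum lemma, and Watson's transformation at the root $a=q^{-n}$ (with the $a\mapsto 1/a$ symmetry handling $a=q^n$), glued by coprimality of the three factors. This also mirrors the paper's own proof of the companion Lemma~\ref{la2}; I checked your parameter allocation in \eqref{eq6} and the resulting prefactor $\left(b/q^r\right)^{(n-r)/d}\left(q^{2r}/b;q^d\right)_{(n-r)/d}/\left(bq^d;q^d\right)_{(n-r)/d}$, and both come out as you claim.
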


In order to complete our proof of Theorem \ref{Thm1}, we still need the following lemma.
\begin{lemma}\label{la2}
Let $n>1$, $d\ge2$ be  integers with $n\equiv r\pmod d$ and $r\in \left\{1, -1\right\}$. Then, modulo $b-q^n$,
\begin{align}
&\sum_{k=0}^{W}[2dk+r]\frac{\left(q^r,cq^r,eq^r,q^r/b,aq^r,q^r/a;q^d\right)_k}
{\left(q^d,q^d/c,q^d/e,bq^d,q^d/a,aq^d;q^d\right)_k}
\left(\frac{b}{ce}\right)^kq^{(2d-3r)k}\nonumber\\
&\equiv[n]\frac{\left(q^r,q^{d-r};q^d\right)_{(n-r)/d}}{\left(q^d/a,aq^d;q^d\right)_{(n-r)/d}}
\sum_{k=0}^{(n-r)/d}\frac{\left(q^{d-r}/{ce},aq^r,q^r/a,q^r/b;q^d\right)_k}{\left(q^d,q^d/c,q^d/e,q^{2r}/b;q^d\right)_k}q^{dk},\label{eq3}
\end{align}
where $W=(n-r)/d$ or $n-1$.
\end{lemma}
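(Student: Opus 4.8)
The plan is to deduce \eqref{eq3} from a single application of Watson's transformation \eqref{eq6}. First I would note that, since $n\equiv r\pmod d$, the integer $N:=(n-r)/d$ is a positive integer, and that modulo $b-q^n$ we may replace $b$ by $q^n$ everywhere. Under this replacement the factor $(q^r/b;q^d)_k$ in the numerator of the $k$-th summand becomes $(q^{r-n};q^d)_k=\big((q^d)^{-N};q^d\big)_k$, whose factor $1-q^{r-n+dN}=1-q^0=0$ is present as soon as $k>N$. Hence, modulo $b-q^n$, the left-hand side of \eqref{eq3} equals its truncation at $k=N$, and this is true whether $W=N$ or $W=n-1$; so no case distinction on $W$ is needed, and it remains to prove the identity obtained after setting $b=q^n$.

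Next I would recognise this terminating sum as a very-well-poised ${}_8\phi_7$ with base $q^d$ and ``$a$-parameter'' $q^r$. Using $[2dk+r]=[r]\,\dfrac{1-q^{r+2dk}}{1-q^r}$ together with $\dfrac{(q^{d}q^{r/2},-q^{d}q^{r/2};q^d)_k}{(q^{r/2},-q^{r/2};q^d)_k}=\dfrac{1-q^{r+2dk}}{1-q^r}$, the left-hand side of \eqref{eq3} with $b=q^n$ equals $[r]$ times
\[
{}_{8}\phi_{7}\!\left[\begin{array}{cccccccc}
q^r, & q^{d}q^{r/2}, & -q^{d}q^{r/2}, & cq^r, & eq^r, & aq^r, & q^r/a, & q^{r-n}\\
 & q^{r/2}, & -q^{r/2}, & q^{d}/c, & q^{d}/e, & q^{d}/a, & aq^{d}, & q^{n+d}
\end{array};q^{d},\ \frac{q^{n+2d-3r}}{ce}\right],
\]
where the argument is obtained from Watson's $\dfrac{a^2q^{N+2}}{bcde}$ (in base $q^d$, with the four free parameters $cq^r,eq^r,aq^r,q^r/a$) as $\dfrac{q^{2r}\,q^{n-r+2d}}{ce\,q^{4r}}=\dfrac{q^{n+2d-3r}}{ce}$, which matches $\big(b\,q^{2d-3r}/ce\big)^k$ at $b=q^n$.

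Then I would apply \eqref{eq6}. The prefactor becomes $\dfrac{(q^{r+d},q^{d-r};q^d)_N}{(q^d/a,aq^d;q^d)_N}$, and the key simplification is
\[
[r]\,(q^{r+d};q^d)_N=\frac{1-q^r}{1-q}\cdot\frac{(q^r;q^d)_{N+1}}{1-q^r}
=\frac{(q^r;q^d)_{N}\,(1-q^{r+dN})}{1-q}=[n]\,(q^r;q^d)_N ,
\]
since $dN=n-r$. The resulting ${}_4\phi_3$ has upper parameters $aq/bc=q^{d-r}/ce$, $aq^r$, $q^r/a$, $q^{r-n}$ and lower parameters $q^d$, $q^d/c$, $q^d/e$, $q^{2r-n}$, with argument $q^d$; reinserting $q^{r-n}\equiv q^r/b$ and $q^{2r-n}\equiv q^{2r}/b$ modulo $b-q^n$ turns it into exactly the sum on the right-hand side of \eqref{eq3}, so the two sides agree modulo $b-q^n$.

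The one point requiring care is the bookkeeping in matching the six pairs of $q$-shifted factorials in \eqref{eq3} against Watson's parameters $a=q^r$, $b=cq^r$, $c=eq^r$, $d=aq^r$, $e=q^r/a$, $q^{-n}\mapsto q^{r-n}$ (base $q^d$), and in tracking the formal half-integer powers $q^{\pm r/2}$; these are harmless because they only contribute the very-well-poised factor $\frac{1-q^{r+2dk}}{1-q^r}$, which is a polynomial identity. Everything else is direct substitution, and the collapse of the tail $N<k\le n-1$ to zero modulo $b-q^n$ disposes of both admissible values of $W$ simultaneously.
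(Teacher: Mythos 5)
Your proposal is correct and follows essentially the same route as the paper: specialize Watson's ${}_8\phi_7$ transformation \eqref{eq6} with $q\to q^d$, $n\to(n-r)/d$, $a=q^r$, $b=cq^r$, $c=eq^r$, $d=aq^r$, $e=q^r/a$, identify $q^{r-n}\equiv q^r/b$ and $q^{2r-n}\equiv q^{2r}/b$ modulo $b-q^n$, and dispose of both values of $W$ via the vanishing of $(q^{r-n};q^d)_k$ for $k>(n-r)/d$. Your extra bookkeeping (the very-well-poised factor and the identity $[r]\,(q^{r+d};q^d)_N=[n]\,(q^r;q^d)_N$) is exactly what the paper leaves implicit, and it checks out.
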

\begin{proof}
Letting $q\to q^d$, $n\to (n-r)/d$, $a=q^r$, $b=cq^r$, $c=eq^r$, $d=aq^r$ and $e=q^r/a$ in Watson's $_8\phi_7$ transformation formula \eqref{eq6}, we have
\begin{align*}
&\sum_{k=0}^{(n-r)/d}[2dk+r]\frac{(q^r,cq^r,eq^r,q^{r-n},aq^r,q^r/a;q^d)_k}
{(q^d,q^d/c,q^d/e,q^{d+n},q^d/a,aq^d;q^d)_k}
\left(\frac{q^{2d+n-3r}}{ce}\right)^k\\
&\quad=[n]\frac{(q^r,q^{d-r};q^d)_{(n-r)/d}}{(q^d/a,aq^d;q^d)_{(n-r)/d}}
\sum_{k=0}^{(n-r)/d}\frac{(q^{d-r}/{ce},aq^r,q^r/a,q^{r-n};q^d)_k}{(q^d,q^d/c,q^d/e,q^{2r-n};q^d)_k}q^{dk}.
\end{align*}
 In light of the fact that $(q^{r-n};q^d)_k=0$ for $n-1\ge k>(n-r)/d$, we confirm the correction of \eqref{eq3}.
\end{proof}
Now, we present a parametric generalization of Theorem \ref{Thm1}.
\begin{theorem}\label{Thm2}
Let $n>1$, $d\ge2$ be  integers with $n\equiv r\pmod d$ and $r\in\left\{1, -1\right\}$. Then, modulo $\Phi_n(q)^2(1-aq^n)(a-q^n)$,
\begin{align}\label{eq5}
&\sum_{k=0}^{(n-r)/d}[2dk+r]\frac{\left(q^r;q^d\right)^2_k\left(cq^r,eq^r,aq^r,q^r/a;q^d\right)_k}
{\left(q^d;q^d\right)^2_k\left(q^d/c,q^d/e,q^d/a,aq^d;q^d\right)_k}
\left(\frac{q^{2d-3r}}{ce}\right)^k\nonumber\\
&\quad\equiv[n]Q_q(a,n)
\sum_{k=0}^{(n-r)/d}\frac{\left(q^{2r+dk};q^d\right)_{(n-r)/d-k}\left(q^{d-r}/{ce},aq^r,q^r/a,q^r;q^d\right)_k}{\left(q^d,q^d/c,q^d/e;q^d\right)_k}q^{dk},
\end{align}
where
\begin{align*}
Q_q(a,n)&=\frac{{q^{r(r-n)/d}}\left(1-aq^n\right)\left(a-q^n\right)}{(1-a)^2}
\left\{\frac{1}{\left(q^d;q^d\right)_{(n-r)/d}}-\frac{\left(q^d;q^d\right)_{(n-r)/d}}{\left(q^d/a,aq^d;q^d\right)_{(n-r)/d}}\right\}\\
&\quad+\frac{q^{r(r-n)/d}}{\left(q^d;q^d\right)_{(n-r)/d}}.
\end{align*}
\end{theorem}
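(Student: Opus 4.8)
The plan is to obtain Theorem~\ref{Thm2} by a \emph{creative microscoping} argument in the auxiliary parameter $b$ that is already present in Lemmas~\ref{la1} and~\ref{la2}, followed by the specialization $b\to 1$ and the Chinese remainder theorem for coprime polynomials. Write $m=(n-r)/d$ and let $S(a,b)$ denote the common left-hand side of \eqref{eq2} and \eqref{eq3}, taken with upper summation limit $m$. The only $b$-denominators occurring in $S(a,b)$ are the factors $(bq^d;q^d)_k$, so $S(a,b)$ is a rational function of $b$ that is regular at $b=1$, and substituting $b=1$ (so that $(q^r/b;q^d)_k\to(q^r;q^d)_k$, $(bq^d;q^d)_k\to(q^d;q^d)_k$, and $(b/ce)^k\to(ce)^{-k}$) turns $S(a,1)$ into precisely the left-hand side of \eqref{eq5}. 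Hence it suffices to control $S(a,1)$ modulo $\Phi_n(q)^2(1-aq^n)(a-q^n)$.

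The first step is to upgrade the power of $\Phi_n(q)$. Both right-hand sides of \eqref{eq2} and \eqref{eq3} carry the factor $[n]=\Phi_n(q)\cdot\bigl([n]/\Phi_n(q)\bigr)$, so their difference is divisible by $\Phi_n(q)$; combining this with Lemma~\ref{la1} (which makes $S(a,b)-(\text{right side of }\eqref{eq2})$ divisible by $\Phi_n(q)(1-aq^n)(a-q^n)$, hence by $\Phi_n(q)$) shows that $S(a,b)-(\text{right side of }\eqref{eq3})$ is divisible by $\Phi_n(q)$. Together with Lemma~\ref{la2} (divisibility by $b-q^n$) and the coprimality of $\Phi_n(q)$ and $b-q^n$ in the relevant Laurent polynomial ring, this yields that $S(a,b)$ is congruent to the right-hand side of \eqref{eq3} modulo $\Phi_n(q)(b-q^n)$. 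Now put $b=1$: since $\Phi_n(q)\mid 1-q^n$, the modulus $\Phi_n(q)(1-q^n)$ is divisible by $\Phi_n(q)^2$, so $S(a,1)$ is congruent modulo $\Phi_n(q)^2$ to the $b=1$ specialization of the right-hand side of \eqref{eq3}. Separately, reading Lemma~\ref{la1} at $b=1$ modulo only the $b$-free factor $(1-aq^n)(a-q^n)$ gives that $S(a,1)$ is congruent modulo $(1-aq^n)(a-q^n)$ to the $b=1$ specialization of the right-hand side of \eqref{eq2}.

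It remains to identify both $b=1$ specializations with the right-hand side of \eqref{eq5}. Using $(q^{2r};q^d)_m=(q^{2r};q^d)_k(q^{2r+dk};q^d)_{m-k}$, the three sums over $k$ in \eqref{eq2}, \eqref{eq3} and \eqref{eq5} reduce at $b=1$ to a common series $\Sigma$ multiplied by $1$, $1$ and $(q^{2r};q^d)_m$ respectively, so only the prefactors must be matched. Since $\frac{(q^d/a;q^d)_m(aq^d;q^d)_m-(q^d;q^d)_m^2}{(1-a)^2}$ is a genuine Laurent polynomial — because its numerator vanishes at $a=1$ and is invariant under $a\mapsto a^{-1}$ — the function $Q_q(a,n)$ differs from $q^{r(r-n)/d}/(q^d;q^d)_m$ by a multiple of $(1-aq^n)(a-q^n)$, so modulo $(1-aq^n)(a-q^n)$ the right-hand side of \eqref{eq5} reduces exactly to the $b=1$ specialization of \eqref{eq2}, matching Lemma~\ref{la1}. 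Modulo $\Phi_n(q)$ one has $(1-aq^n)(a-q^n)\equiv-(1-a)^2$, whence $Q_q(a,n)\equiv q^{r(r-n)/d}(q^d;q^d)_m/(q^d/a,aq^d;q^d)_m$, so matching the right-hand side of \eqref{eq5} with the $b=1$ specialization of \eqref{eq3} modulo $\Phi_n(q)$ comes down to the cyclotomic identity
\[
q^{r(r-n)/d}(q^d;q^d)_m(q^{2r};q^d)_m\equiv(q^r;q^d)_m(q^{d-r};q^d)_m\pmod{\Phi_n(q)},\qquad m=(n-r)/d,
\]
which I would prove by reducing every exponent modulo $n$ via $n=dm+r$ together with the elementary relation $1-q^{-t}\equiv -q^{-t}(1-q^t)$. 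Finally, since $\Phi_n(q)^2$ and $(1-aq^n)(a-q^n)$ are coprime, the Chinese remainder theorem combines the two congruences obtained for $S(a,1)$ into the statement of Theorem~\ref{Thm2}. I expect this last paragraph to be the main obstacle: producing exactly the rational function $Q_q(a,n)$ out of the $b=1$ reductions of \eqref{eq2} and \eqref{eq3}, verifying the displayed $q$-shifted-factorial congruence, and ensuring that every displayed relation is an honest congruence of Laurent polynomials rather than of rational functions with uncontrolled denominators (in particular that the factors being cancelled, such as $(q^d/a,aq^d;q^d)_m$, are units modulo $\Phi_n(q)$).
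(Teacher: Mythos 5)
Your argument is correct, and it uses the same essential ingredients as the paper — Lemmas~\ref{la1} and~\ref{la2}, the coprimality of $\Phi_n(q)$ and $b-q^n$, and the $b\to1$ specialization to upgrade $b-q^n$ into an extra factor of $\Phi_n(q)$ — but it organizes them in the opposite order. The paper first glues \eqref{eq2} and \eqref{eq3} into a single congruence \eqref{eq4} modulo $\Phi_n(q)(1-aq^n)(a-q^n)(b-q^n)$ by means of the explicit CRT interpolants $\frac{(b-q^n)(ab-1-a^2+aq^n)}{(a-b)(1-ab)}$ and $\frac{(1-aq^n)(a-q^n)}{(a-b)(1-ab)}$, rewrites the prefactor using reductions of $(q^{d-r};q^d)_{(n-r)/d}$ and $(q^r;q^d)_{(n-r)/d}$ modulo $b-q^n$ to reach \eqref{eq9}, and only then sends $b\to1$; the function $Q_q(a,n)$ emerges constructively from $\Omega_q(a,1,n)$ via the identity $(1-q^n)(1+a^2-a-aq^n)=(1-a)^2+(1-aq^n)(a-q^n)$. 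You instead specialize $b=1$ in each lemma separately, obtain the two congruences modulo $\Phi_n(q)^2$ and modulo $(1-aq^n)(a-q^n)$, and \emph{verify} the given $Q_q(a,n)$ by checking its two residues, which costs you the extra root-of-unity identity $q^{r(r-n)/d}(q^d;q^d)_m(q^{2r};q^d)_m\equiv(q^r;q^d)_m(q^{d-r};q^d)_m\pmod{\Phi_n(q)}$. I checked that identity (it follows exactly by the exponent reduction $n=dm+r$ and $1-q^{-t}\equiv-q^{-t}(1-q^t)$ that you indicate, the net exponent being $-m(dm+2r)\equiv r(r-n)/d$ after discarding $q^{-mn}\equiv1$), and your divisibility claims (the symmetric numerator vanishing to second order at $a=1$, $[n]$ supplying the extra $\Phi_n(q)$, the denominators staying coprime to the moduli at $b=1$) are all sound. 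Your approach buys a cleaner logical structure at the price of one more $q$-factorial congruence; the paper's buys an explicit derivation of $Q_q$ at the price of the heavier intermediate objects $\theta_q$ and $\Omega_q$.

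One small caution: you introduce the common series $\Sigma$ \emph{after} setting $b=1$, and $\Sigma$ carries $(q^{2r};q^d)_k$ in its denominator; in the single degenerate case $d=2$, $r=-1$ this factor contains $1-q^{0}=0$ for $k\ge2$, so $\Sigma$ is not defined there. The fix is exactly the paper's passage from \eqref{eq4} to \eqref{eq9}: absorb $(q^{2r}/b;q^d)_k$ into the numerator as $(q^{2r+dk}/b;q^d)_{(n-r)/d-k}$ \emph{before} specializing $b$, so that no vanishing denominator ever appears. This is a presentational reordering, not a gap in the idea, and it is invisible in the case $d=3$ used for Theorem~\ref{T.1}.
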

\begin{proof}
It is easy to see that $\Phi_n(q)\left(1-aq^n\right)\left(a-q^n\right)$ and $b-q^n$ are relatively prime polynomials. Noting the relations
\begin{align*}
\frac{\left(b-q^n\right)\left(ab-1-a^2+aq^n\right)}{(a-b)(1-ab)}&\equiv 1 \pmod{(1-aq^n)(a-q^n)},\\
\frac{\left(1-aq^n\right)\left(a-q^n\right)}{(a-b)(1-ab)}&\equiv 1 \pmod{b-q^n},
\end{align*}
and employing the Chinese remainder theorem for coprime polynomials, we arrive at the following result
from Lemma \ref{la1} and Lemma \ref{la2}: modulo $\Phi_n(q)(1-aq^n)(a-q^n)(b-q^n)$,
\begin{align}
&\sum_{k=0}^{(n-r)/d}[2dk+r]\frac{\left(q^r,cq^r,eq^r,q^r/b,aq^r,q^r/a;q^d\right)_k}
{\left(q^d,q^d/c,q^d/e,bq^d,q^d/a,aq^d;q^d\right)_k}
\left(\frac{b}{ce}\right)^kq^{(2d-3r)k}\nonumber\\
&\quad\equiv[n]\theta_q(a,b,n)\sum_{k=0}^{(n-r)/d}\frac{\left(q^{d-r}/{ce},aq^r,q^r/a,q^r/b;q^d\right)_k}
{\left(q^d,q^d/c,q^d/e,q^{2r}/b;q^d\right)_k}q^{dk},\label{eq4}
\end{align}
where the notation $\theta_q(a,b,n)$ on the right-hand side denotes
\begin{align*}
\theta_q(a,b,n)&=\frac{(b-q^n)\left(ab-1-a^2+aq^n\right)}{(a-b)(1-ab)}\frac{(b/{q^r})^{(n-r)/d}\left(q^{2r}/b;q^d\right)_{(n-r)/d}}
{\left(bq^d;q^d\right)_{(n-r)/d}}\\
&\quad+\frac{(1-aq^n)(a-q^n)}{(a-b)(1-ab)}\frac{\left(q^r,q^{d-r};q^d\right)_{(n-r)/d}}{\left(q^d/a,aq^d;q^d\right)_{(n-r)/d}}.
\end{align*}
It is not difficult to see that
\begin{align*}
(q^{d-r};q^d)_{(n-r)/d}&=\left(1-q^{d-r}\right)\left(1-q^{2d-r}\right)\cdots\left(1-q^{n-2r}\right)\\
&\equiv(1-bq^{d-r-n})(1-bq^{2d-r-n})\cdots(1-bq^{-2r})\\
&\equiv(-1)^{(n-r)/d}b^{(n-r)/d}q^\frac{(n-r)(d-n-3r)}{2d}(q^{2r}/b;q^d)_{(n-r)/d}\pmod {b-q^n},\\
(q^{r};q^d)_{(n-r)/d}&=(1-q^r)(1-q^{d+r})\cdots(1-q^{n-d})\\
&\equiv(1-bq^{r-n})\left(1-bq^{d+r-n}\right)\cdots\left(1-bq^{-d}\right)\\
&\equiv(-1)^{(n-r)/d}q^\frac{(n-r)(n-d+r)}{2d}(q^d/b;q^d)_{(n-r)/d}\pmod{b-q^n}.
\end{align*}
Therefore, we can rewrite \eqref{eq4} as, modulo $\Phi_n(q)(1-aq^n)(a-q^n)(b-q^n)$,
\begin{align}
&\sum_{k=0}^{(n-r)/d}[2dk+r]\frac{\left(q^r,cq^r,eq^r,q^r/b,aq^r,q^r/a;q^d\right)_k}
{\left(q^d,q^d/c,q^d/e,bq^d,q^d/a,aq^d;q^d\right)_k}
\left(\frac{b}{ce}\right)^kq^{(2d-3r)k}\nonumber\\
&\quad \equiv[n]\Omega_q(a,b,n)\sum_{k=0}^{(n-r)/d}\frac{\left(q^{2r+dk}/b;q^d\right)_{(n-r)/d-k}
\left(q^{d-r}/{ce},aq^r,q^r/a,q^r/b;q^d\right)_k}{\left(q^d,q^d/c,q^d/e;q^d\right)_k}q^{dk},\label{eq9}
\end{align}
where the notation $\Omega_q(a,b,n)$ on the right-hand side denotes
\begin{align*}
\Omega_q(a,b,n)&=\frac{\left(b-q^n\right)\left(ab-1-a^2+aq^n\right)
\left(b/{q^r}\right)^{(n-r)/d}}{(a-b)(1-ab)\left(bq^d;q^d\right)_{(n-r)/d}}\\
&\quad +\frac{\left(1-aq^n\right)\left(a-q^n\right)\left(b/{q^r}\right)^{(n-r)/d}
\left(q^d/b;q^d\right)_{(n-r)/d}}{(a-b)(1-ab)\left(q^d/a,aq^d;q^d\right)_{(n-r)/d}}.
\end{align*}
It is easy to say that the limit of $b-q^n$ as $b\to 1$ has the factor $\Phi_n(q)$. Meanwhile, since $n\equiv r\pmod d$, i.e., $gcd(d,n)=1$,
the factor $(bq^d;q^d)_{(n-r)/d}$
in the denominator of the left-hand side of \eqref{eq9} as $b\to 1$ is relatively prime to $\Phi_n(q)$.
Thus, letting $b\to 1$ in \eqref{eq9}, we conclude that \eqref{eq5} is true modulo $\Phi_n(q)^2(1-aq^n)(a-q^n)$ with the relation:
\begin{equation*}
(1-q^n)(1+a^2-a-aq^n)=(1-a)^2+(1-aq^n)(a-q^n).
\end{equation*}
\end{proof}
\begin{proof}[Proof of Theorem \ref{Thm1}]
By the L'Hospital rule, we have
\begin{align*}
&\lim_{a\to 1}\frac{(1-aq^n)(a-q^n)}{(1-a)^2}\left\{\frac{1}{\left(q^d;q^d\right)_{(n-r)/d}}-\frac{\left(q^d;q^d\right)_{(n-r)/d}}
{\left(aq^d,q^d/a;q^d\right)_{(n-r)/d}}\right\}
\\
&=-\frac{[n]^2}{\left(q^d;q^d\right)_{(n-r)/d}}\sum_{i=1}^{(n-r)/d}\frac{q^{di}}{[di]^2}.
\end{align*}
Letting $a\to 1$ in Theorem \ref{Thm2} and utilizing the above limit, we deduce that \eqref{eq1} is true
modulo $\Phi_n(q)^4$ by noticing that $\left(q^{r}; q^d\right)^4_k \equiv 0 \pmod {\Phi_n(q)^4}$ for
$(n-r)/d < k \le n-1$.
From Lemma \ref{la3} with $r\in\left\{1, -1\right\}$ and $a=b=1$, we conclude that the congruence \eqref{eq1}
 holds modulo $[n]$. Since the least common multiple of $[n]$ and $\Phi_n(q)^4$ is $[n]\Phi_n(q)^3$,
 we obtain the desired result.
\end{proof}




\end{document}